\definecolor{red}{rgb}{1,0,0}
\definecolor{blue}{rgb}{.2,.2,.8}
\definecolor{magenta}{rgb}{1,0,1}
\definecolor{dartmouthgreen}{rgb}{0.05, 0.5, 0.06}
\definecolor{purple(x11)}{rgb}{0.63,0.36,0.94}
\definecolor{turquoise}{rgb}{0.25, 0.87, 0.81}
\newtheorem{theorem}{Theorem}[section]
\newtheorem{corollary}[theorem]{Corollary}
\newtheorem{conj}{Conjecture}
\theoremstyle{definition}
\newtheorem{example}{Example}
\newtheorem{remark}{Remark}
\title{
On the number of parts in all partitions enumerated by the Rogers-Ramanujan identities}
\author{Cristina Ballantine and  Amanda Folsom}
\date{}
\keywords{Rogers-Ramanujan identities, partitions, Beck-type identities, $q$-series.}
\subjclass[2010]{11P84, 05A17, 05A19, 33D15}
\address{Department of Mathematics and Computer Science, College of the Holy Cross, Worcester, MA 01610, USA} 
\email{cballant@holycross.edu}
\address{Department of Mathematics and Statistics, Amherst College, Amherst, MA 01002, USA}  
\email{afolsom@amherst.edu}
\begin{document}

\maketitle
\begin{center}\textit{Dedicated to Professor M.V. Subbarao in honor of the centenary of his birth.} \end{center}
\begin{abstract}
The celebrated Rogers-Ramanujan identities equate the number of integer partitions of $n$ ($n\in\mathbb N_0$) with parts congruent to $\pm 1 \pmod{5}$ (respectively $\pm 2 \pmod{5}$) and  the number of partitions of $n$ with super-distinct parts (respectively  super-distinct parts greater than $1$).   In this paper, we establish companion identities to the Rogers-Ramanujan identities on the number of parts in all partitions  of $n$ of the aforementioned types, in the spirit of earlier work by Andrews and Beck on a partition identity of Euler. 
\end{abstract}
\section{Introduction}\label{sec_intro}
The Rogers-Ramanujan identities are a pair of identities which assert that the number of integer partitions of $n$ ($n\in\mathbb N_0$) with parts congruent to $\pm 1 \pmod{5}$ (respectively $\pm 2 \pmod{5}$) equals the number of partitions of $n$ with super-distinct parts (respectively  super-distinct parts {greater than $1$}).  Super-distinct parts are also referred to as $2$-distinct parts, and must    differ by at least 2.    The   identities are typically expressed in analytic form, as
\begin{align*}
\sum_{n=0}^\infty \frac{q^{n^2}}{(q;q)_n} &= \frac{1}{(q;q^5)_\infty (q^4;q^5)_\infty}, \\
\sum_{n=0}^\infty \frac{q^{n^2+n}}{(q;q)_n} &=   \frac{1}{(q^2;q^5)_\infty (q^3;q^5)_\infty}, 
\end{align*}
noting that the series and products appearing are the relevant partition generating functions.  Here and throughout, the $q$-Pochhammer symbol is defined for $n\in \mathbb N_0 \cup\{\infty\}$ by   
\begin{align*}
 (a;q)_n := \prod_{j=0}^{n-1}(1-a q^j) = (1-a)(1-aq)(1-aq^2) \cdots (1-aq^{n-1}).
\end{align*} 
For the remainder of the article, we assume $|q|<1$ so that all series converge absolutely.

The Rogers-Ramanujan identities have an extensive and rich history.  Rogers and Ramanujan independently discovered the identities in the late 19th century/early 20th century,  and Rogers provided the first known proof \cite{Rogers}.    Rogers and Ramanujan later published a joint proof \cite{RogRam}, around the same time that Schur independently rediscovered and proved the identities \cite{Schur}.     Since then, the identities have played important roles in and have made connections to diverse areas, including combinatorics, $q$-hypergeometric series, Lie Algebras, modular forms, statistical mechanics, and more (see, e.g., \cite{AndRRG, AndHex, Duke, Fol, GOW, LepWil2, LepWil1, Sills, Slater}, for more).

Like the Rogers-Ramanujan identities, many other  identities in the subject of integer partitions equate the number of partitions of $n$ with parts belonging to a certain set and the number of partitions of $n$ satisfying a particular condition.   Perhaps the oldest such result is Euler's identity, which equates  the number of partitions of $n$ with odd parts and the number of partitions of $n$ with distinct parts.  {Centuries later in 2017,}  Beck made the following related conjecture concerning the number of parts in all partitions of the types appearing in Euler's identity, which we state as follows \cite{oeisA090867}, \cite[Conjecture]{A17}.  
\begin{conj}[Beck] \label{conj}
The excess of the number of parts in all partitions of $n$ with odd parts over the number of parts in all partitions of $n$ with distinct parts equals the number of partitions of $n$ with exactly one even part (possibly repeated).
\end{conj}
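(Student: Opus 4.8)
The plan is to prove Beck's conjecture analytically, by computing the relevant generating functions and reducing the claimed identity to an identity of $q$-series. Write $a(n)$ (resp.\ $b(n)$) for the number of parts in all partitions of $n$ into odd (resp.\ distinct) parts, and $c(n)$ for the number of partitions of $n$ with exactly one even part (possibly repeated); the goal is $a(n)-b(n)=c(n)$. First I would introduce an auxiliary variable $z$ tracking the number of parts: the generating function for partitions into odd parts with $z$ marking each part is $\prod_{j\ge 1}(1-zq^{2j-1})^{-1}$, and for partitions into distinct parts with $z$ marking each part it is $\prod_{j\ge 1}(1+zq^j)$. Applying $\partial/\partial z$ (equivalently, taking the logarithmic derivative) and setting $z=1$ yields
\begin{align*}
\sum_{n\ge 0}a(n)q^n &= \frac{1}{(q;q^2)_\infty}\sum_{j\ge 1}\frac{q^{2j-1}}{1-q^{2j-1}},\\
\sum_{n\ge 0}b(n)q^n &= (-q;q)_\infty\sum_{j\ge 1}\frac{q^{j}}{1+q^{j}}.
\end{align*}

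Next I would invoke Euler's identity $(q;q^2)_\infty^{-1}=(-q;q)_\infty$, which shows the two infinite-product prefactors coincide, so that
\[
\sum_{n\ge 0}\bigl(a(n)-b(n)\bigr)q^n \;=\; \frac{1}{(q;q^2)_\infty}\left(\sum_{\substack{m\ge 1\\ m\ \mathrm{odd}}}\frac{q^m}{1-q^m}\;-\;\sum_{j\ge 1}\frac{q^j}{1+q^j}\right).
\]
The heart of the argument is to simplify the bracketed Lambert-type series. Using the elementary identity $\dfrac{q^j}{1+q^j}=\dfrac{q^j}{1-q^j}-\dfrac{2q^{2j}}{1-q^{2j}}$ and splitting $\sum_{j\ge1}\frac{q^j}{1-q^j}$ into its odd- and even-indexed parts, the odd contributions cancel and the bracket collapses to $\sum_{j\ge 1}\frac{q^{2j}}{1-q^{2j}}$. (Alternatively, one can expand each Lambert series into a divisor-type sum and invoke the classical identity $\sum_{k\mid N}(-1)^{k-1}=d_{\mathrm{odd}}(N)-d_{\mathrm{even}}(N)$.)

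Finally I would identify $\ds\frac{1}{(q;q^2)_\infty}\sum_{j\ge 1}\frac{q^{2j}}{1-q^{2j}}$ as the generating function for $c(n)$: the factor $\sum_{j\ge1}\frac{q^{2j}}{1-q^{2j}}$ selects the unique even part value $2j$ together with its positive multiplicity, while $(q;q^2)_\infty^{-1}$ contributes an arbitrary partition into odd parts. Comparing coefficients of $q^n$ then gives $a(n)-b(n)=c(n)$, as desired. I expect the only genuine obstacle to be the bookkeeping in the Lambert-series reduction — in particular carrying out the odd/even split correctly and checking the manipulations converge for $|q|<1$; the generating-function setup and the final combinatorial reading of the product are formal. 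A bijective proof matching (signed) parts against partitions with exactly one even part would be an appealing alternative, but the $q$-series route above is the most direct.
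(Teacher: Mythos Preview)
The paper does not actually prove Conjecture~\ref{conj}; it is stated there purely as background, with the proof attributed to Andrews~\cite{A17} (and combinatorial proofs to Yang~\cite{Yang19} and Ballantine--Bielak~\cite{BB19}).  So there is no ``paper's own proof'' to compare against.

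That said, your argument is correct and complete.  The differentiation-at-$z=1$ setup, the use of Euler's identity $(q;q^2)_\infty^{-1}=(-q;q)_\infty$ to match the two prefactors, the Lambert-series reduction via $\frac{q^j}{1+q^j}=\frac{q^j}{1-q^j}-\frac{2q^{2j}}{1-q^{2j}}$, and the combinatorial reading of $\frac{1}{(q;q^2)_\infty}\sum_{j\ge1}\frac{q^{2j}}{1-q^{2j}}$ all go through as you describe.  This is essentially Andrews' original analytic proof in~\cite{A17}, and it is exactly the template the present paper adopts for its own main results: compare your $\partial/\partial z|_{z=1}(P-R)$ calculation with the paper's derivation of $T_1(q)-T_2(q)$ in \eqref{def_T1T2} and of $S_1(q)-S_2(q)$ in the proof of Theorem~\ref{thm_rr2}.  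The only step that deserves a moment's care is the rearrangement of the Lambert series (absolute convergence for $|q|<1$ justifies it), and you have flagged that yourself.
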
  
Andrews \cite{A17} quickly proved Beck's conjecture, and additionally showed that this excess also equals the number of partitions of $n$ with exactly one  part  repeated (and all other parts distinct).   Yang \cite{Yang19} and Ballantine--Bielak \cite{BB19} also provided independent combinatorial proofs of Beck's conjecture.   This  work on Beck's conjecture on the number of parts in all partitions of the types appearing in Euler's identity has been followed by a number of generalizations  and Beck-type  companion identities to other well known identities, such as   \cite{AB19, BW21, LW20, Yang19}.   In \cite{BW21a}, Beck-type identities are generalized to all Euler  pairs of order $r$ as defined by Subbarao in \cite{S71}.

In this paper, we state and prove Beck-type companion identities to the Rogers-Ramanujan identities on the excess 
of the number of parts in all partitions of $n$ with parts congruent to $\pm 1 \pmod{5}$ (respectively $\pm 2 \pmod{5}$) over the number of parts in all partitions of $n$ with super-distinct parts (respectively super-distinct parts greater than $1$).  These results are stated in Theorem \ref{thm_rr1} and Theorem \ref{thm_rr2} below, and we give proofs {which are both analytic and combinatorial in nature} in the sections that follow.   

\section{Preliminaries} \label{sec:notation} 
{In this section, we give some background and preliminaries on partitions and $q$-series.}
\subsection{Integer partitions}
Let $n\in\mathbb N_0$. A \emph{partition} of $n$, denoted  $\lambda=(\lambda_1, \lambda_2, \ldots, \lambda_j)$,     is a non-increasing sequence of positive integers $\lambda_1\geq \lambda_2 \geq \cdots \geq \lambda_j$ called \emph{parts} that add up to $n$. We refer to $n$ as the \emph{size} of $\lambda$.  The length of $\lambda$ is the number of parts of $\lambda$, denoted by $\ell(\lambda)$.   We abuse notation and use $\lambda$ to denote either the multiset of its parts or the non-increasing sequence of parts.  We write $a\in \lambda$ to mean the positive integer $a$ is a part of $\lambda$. We write $|\lambda|$ for the size of $\lambda$ and  $\lambda\vdash n$  to mean that $\lambda$ is a partition of size $n$. For a pair of partitions $(\lambda, \mu)$ we also write $(\lambda, \mu)\vdash n$ to mean $|\lambda|+|\mu|=n$. 
We use the convention that $\lambda_k=0$ for all $k>\ell(\lambda)$.  When convenient we will also use the exponential notation for parts in a partition:  the exponent of a part is the multiplicity of the part in the partition.  This notation will be used mostly for rectangular partitions.  We write $(a^b)$ for the partition consisting of $b$ parts equal to $a$. 

The \emph{Ferrers diagram} of a partition $\lambda=(\lambda_1, \lambda_2, \ldots, \lambda_j)$ is an array of left justified boxes such that the $i$th row from the top contains $\lambda_i$ boxes. We abuse notation and use $\lambda$ to mean a partition or its Ferrers diagram.  For example, the Ferrers diagram of $\lambda=(5, 2,2,1)$ is shown in Figure \ref{fig:1}. 
\begin{figure}
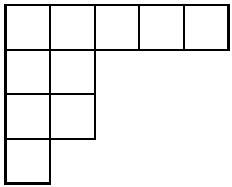

$$\small\ydiagram{5, 2,2,1}$$ 
\caption{A Ferrers diagram}
 \label{fig:1}
\end{figure}

Given a partition $\lambda$, its \emph{conjugate} $\lambda'$ is the partition for which the rows in its Ferrers diagram are precisely the columns in the Ferrers diagram of $\lambda$. For example, the conjugate of $\lambda=(5, 2,2,1)$ is $\lambda'=(4,3,1,1,1)$. 

By the sum of the partitions $\lambda=(\lambda_1, \lambda_2, \ldots, \lambda_j)$ and $\mu=(\mu_1, \mu_2, \ldots, \mu_k)$ we mean the partitions $\lambda+\mu=(\lambda_1+\mu_1, \lambda_2+\mu_2, \ldots, \lambda_\ell+\mu_\ell)$, where $\ell=\max\{j,k\}$. 

{As mentioned in Section \ref{sec_intro},} we say that the parts of a partition are \textit{super-distinct} if any two parts differ by at least $2$. We refer to partitions with super-distinct parts as super-distinct partitions. 

Since our goal is to study the number of parts in partitions, we introduce the notion of  marked partitions. A \textit{marked partition} is a partition with a single part marked. Note that $(5,2^*, 2, 1)$ and $(5,2,2^*,1)$ are different marked partitions.  Then the number of parts in all partitions of $n$ satisfying certain conditions is equal to the number of marked partitions of $n$ satisfying the same conditions. 

If $\mathcal  M(n)$ is the set of marked partitions of size $n$ whose parts satisfy certain conditions, we have a one-to-one correspondence between $\mathcal  M(n)$ and the set of pairs of partitions $(\lambda, (a^b))\vdash n$, where $b$ is a positive integer and $\lambda$ and $(a)$ are partitions whose parts satisfy the conditions of $\mathcal M(n)$. {To explain this, if} $\mu\in \mathcal M(n)$  has marked part $\mu_j=a$ and $\mu_j$ is the $b$th part equal to $a$,  we remove from $\mu$ the first $b$ parts equal to $a$  to obtain a partition $\lambda$. Then $\mu \leftrightarrow (\lambda, (a^b))$. Thus,  the number of parts in all partitions of $n$ satisfying certain conditions is equal to the number of pairs of partitions $(\lambda, (a^b))\vdash n$, $a,b>0$, such that $a$ and the parts of $\lambda$ satisfy the same conditions. 

For more details on partitions, we refer the reader to \cite{AndrewsEncy,AE}.
\subsection{Some results on $q$-series}
The $q$-binomial coefficients $\left[\begin{array}{c} A+k \\ k \end{array} \right]_q$  may be defined as the generating function for the number of partition of $n$ with at most $k$ parts, each part at most $A$ \cite[p67]{AE}, from which it follows that
\begin{align}\label{lem_1}  
 \sum_{0\leq n_1\leq n_2 \cdots    \leq n_k\leq A} q^{n_1+n_2 + \cdots + n_k}
= \left[\begin{array}{c} A+k \\ k \end{array} \right]_q,
\end{align}
\begin{align}\label{qbin_sym}
\left[\begin{array}{c} A+k \\ k \end{array} \right]_q = \left[\begin{array}{c} A+k \\ A \end{array} \right]_q,
\end{align} and
\begin{align}\label{qbin_zero}
\left[\begin{array}{c} A+k \\ k \end{array} \right]_q = 0, \text{ if $A<0$, $k<0$, or $A+k<0$}. 
\end{align}
The $q$-binomial series \cite[Theorem 9]{AE} gives the following generating function for the $q$-binomial coefficients ($|t|<1$)
\begin{align}\label{lem_3}\sum_{k=0}^\infty \left[\begin{array}{c} A+k \\ k \end{array} \right]_q t^k = \frac{1}{(t;q)_{A+1}}.
\end{align}
Another $q$-series identity we will make use of is 
\begin{align}\label{lem_2} 
 \sum_{A\leq n_1\leq n_2 \cdots    \leq n_k} q^{n_1+n_2 + \cdots + n_k} = 
\frac{q^{Ak}}{(q;q)_k},
\end{align}  which can be verified  directly analytically, or combinatorially 
by viewing a partition $\lambda$ with $k$ parts greater than or equal to $A>0$ as $\lambda=\eta+(A^k)$ where $\eta$ is a partition with at most $k$ parts. If $A=0$, \eqref{lem_2} is the generating function for partitions with at most $k$ parts. 
\section{The number of parts in the first Rogers-Ramanujan identity}

Our first result, Theorem \ref{thm_rr1}, gives the excess in the number of parts of partitions involved in the first Rogers-Ramanujan identity. We consider the empty partition a super-distinct partition.

\begin{theorem}\label{thm_rr1} The excess 
of the number of parts in all partitions of $n$ with parts congruent to $\pm 1 \pmod{5}$  over the number of parts in all super-distinct partitions of $n$  equals the number of pairs of partitions $(\lambda, (a^b))$ satisfying all of the following conditions: 
$\lambda$ is a super-distinct partition of $n-ab$,  $a\equiv \pm 1\pmod 5$, $b\geq 1$, and 
if $a=1$, then at least one of $b-1, b, b+1$ is a part of $\lambda$. 
\end{theorem}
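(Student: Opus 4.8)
The plan is to prove Theorem \ref{thm_rr1} by computing the relevant generating functions. Using the marked-partition interpretation from Section \ref{sec:notation}, the number of parts in all partitions of $n$ with parts $\equiv\pm1\pmod5$ is the coefficient of $q^n$ in
\begin{align*}
P_1(q) := \sum_{a\equiv\pm1\,(5)}\sum_{b\geq1}\frac{q^{ab}}{(q;q^5)_\infty(q^4;q^5)_\infty}
= \frac{1}{(q;q^5)_\infty(q^4;q^5)_\infty}\sum_{a\equiv\pm1\,(5)}\frac{q^a}{1-q^a},
\end{align*}
and similarly, using the finite Rogers--Ramanujan-type product $(q^2;q)_{n-1}$ or the standard super-distinct encoding $\lambda_i = \mu_i + 2(n-i)$ for a partition $\mu$ with at most $n$ parts, the number of parts in all super-distinct partitions of $n$ is the coefficient of $q^n$ in
\begin{align*}
P_2(q) := \sum_{n\geq1}\frac{q^{n^2}}{(q;q)_n}\sum_{i=1}^{n}\frac{q^{?}}{?},
\end{align*}
where the inner sum tracks which part of a super-distinct partition of a given length gets marked; I would set this up carefully via the bijection $\mu\leftrightarrow(\lambda,(a^b))$ so that $P_2(q)=\sum_{n\geq1}\frac{q^{n^2}}{(q;q)_n}\cdot\frac{q-q^{n+1}}{1-q}$ comes out of summing a geometric-type series over the marked position. (A cleaner route: marking a part of a super-distinct partition and then removing a suitable rectangle, exactly as in the statement, already produces pairs $(\lambda,(a^b))$ with $\lambda$ super-distinct; I would organize $P_2(q)$ directly as such a sum.) The theorem is then equivalent to the identity
\begin{align*}
P_1(q) - P_2(q) = \sum_{\substack{a\equiv\pm1\,(5)\\ b\geq1}} q^{ab}\,S_a^{(b)}(q),
\end{align*}
where $S_a^{(b)}(q)$ is the generating function for super-distinct partitions $\lambda$, subject to the extra constraint that when $a=1$ at least one of $b-1,b,b+1$ appears as a part of $\lambda$.

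The key technical step is to separate the $a=1$ term from the rest. For $a\geq4$ with $a\equiv\pm1\pmod5$, the contribution to the excess $P_1-P_2$ should match $q^{ab}/((q;q^5)_\infty(q^4;q^5)_\infty)$ term-by-term after using the first Rogers--Ramanujan identity to rewrite the product as $\sum q^{n^2}/(q;q)_n$; the delicate cancellation happens entirely in the $a=1$ block, where $P_2$'s marked-part generating function must be shown to kill exactly the super-distinct partitions $\lambda$ of $n-b$ in which \emph{none} of $b-1,b,b+1$ is a part. Concretely, I would prove the finite identity
\begin{align*}
\sum_{n\geq1}\frac{q^{n^2}}{(q;q)_n}\bigl(\text{marked-part gf}\bigr)
= \sum_{b\geq1} q^{b}\bigl(\text{gf for super-distinct }\lambda\text{ with }b-1,b,\text{ or }b+1\text{ a part of }\lambda\bigr)^{c},
\end{align*}
i.e. identify the super-distinct partitions that fail the $a=1$ condition with the marked super-distinct partitions, via the observation that if $\mu$ is super-distinct and we mark its $b$th largest part, removing $(a^b)$ with $a=\mu_j$ leaves a super-distinct $\lambda$ whose parts avoid $\{b-1,b,b+1\}$ precisely when a forced collision with the removed rectangle does \emph{not} occur. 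This requires a careful case analysis on whether $\mu_j = \mu_{j-1}-2$, $\mu_{j+1}=\mu_j-2$, etc., translating the ``gap exactly $2$'' conditions on $\mu$ into the ``part in $\{b-1,b,b+1\}$'' condition on $\lambda$.

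I expect the main obstacle to be exactly this bijective/generating-function bookkeeping around $a=1$: making precise how marking the $b$th copy of the smallest relevant part in a super-distinct partition and deleting the rectangle $(1^b)$ interacts with the lower parts of $\lambda$, and verifying that the ``at least one of $b-1,b,b+1$'' clause is neither too strong nor too weak. The non-$a=1$ part is, by contrast, a direct consequence of the first Rogers--Ramanujan identity together with \eqref{lem_2} and \eqref{lem_3}. If the purely generating-function approach gets unwieldy, I would fall back on constructing an explicit injection realizing the excess: given a marked partition counted by $P_1$, peel off parts $\equiv\pm1\pmod5$ and map the rest to a super-distinct partition via the classical Rogers--Ramanujan bijection (Schur/Andrews--Bressoud style), tracking the marked part to produce the pair $(\lambda,(a^b))$, with the $a=1$ exceptional clause appearing as the image of the marked partitions not hit by the corresponding map on the super-distinct side.
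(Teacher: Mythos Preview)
Your overall architecture is right: separate $a=1$ from $a\geq 4$, observe that the $a\geq 4$ contribution is immediate from the first Rogers--Ramanujan identity, and concentrate on the $a=1$ block. But the $a=1$ analysis, which is the entire content of the theorem, has a genuine gap.

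First, your formula for $P_2(q)$ is incorrect. A super-distinct partition with exactly $m$ parts is generated by $q^{m^2}/(q;q)_m$, and marking a part simply multiplies by $m$; hence the number-of-parts generating function is
\[
T_2(q)=\sum_{m\geq 0}\frac{m\,q^{m^2}}{(q;q)_m},
\]
not $\sum_{m}\frac{q^{m^2}}{(q;q)_m}\cdot\frac{q-q^{m+1}}{1-q}$. There is no geometric sum over the marked position because the position contributes no extra weight.

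Second, and more seriously, your description of the injection is confused about the roles of $a$ and $b$. You write of ``marking the $b$th largest part'' and ``removing $(a^b)$ with $a=\mu_j$,'' and elsewhere of ``the $b$th copy of the smallest relevant part.'' In a super-distinct partition there is only one copy of any value, so neither reading produces a rectangle with $a\equiv\pm1\pmod 5$ and $b\geq 1$ in general. The missing idea is this: given a super-distinct $\mu$ with marked part $\mu_i=c$, send it to
\[
\varphi(\mu)=\bigl(\mu\setminus\{c\},\,(1^{c})\bigr),
\]
so that $a=1$ always and $b=c$ is the \emph{value} of the marked part, not its index. Because $\mu$ is super-distinct, $\mu\setminus\{c\}$ is super-distinct and contains none of $c-1,c,c+1=b-1,b,b+1$. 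Thus $\varphi$ is an injection from marked super-distinct partitions into $\{(\lambda,(1^b)):\lambda\text{ super-distinct},\ b\geq1\}$, and its image is exactly the pairs with $b-1,b,b+1\notin\lambda$. The complement of the image inside $\mathcal T_1(n)$ is precisely the set described in the theorem, and that finishes the proof in one line. Your proposed ``case analysis on whether $\mu_j=\mu_{j-1}-2$,'' and the fallback via a Schur/Bressoud-type Rogers--Ramanujan bijection, are unnecessary once this map is identified; without it, your outline does not yet contain a mechanism that singles out the condition ``at least one of $b-1,b,b+1$ is a part of $\lambda$.''
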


 Before we prove the theorem we note that  the original Beck identity {(Conjecture \ref{conj})} can be reformulated in terms of pairs of partitions {as in Theorem \ref{thm_rr1} above} as follows. The excess in the total number of parts in all partitions of $n$ into distinct parts over the  total number of parts in all partitions of $n$ into odd parts equals the number of pairs of partitions $(\lambda, (a^b))\vdash n$, where $\lambda$ is a partition into odd parts, $a,b>0$ and $a$ is even.  This is also the number of pairs $(\lambda, (a^b))\vdash n$, where $\lambda$ is a partition into distinct parts, $a>0$, $a\not\in \lambda$, and $b\geq 2$. 
 
\begin{example} Let $n=4$. The partitions with parts congruent to $\pm 1 \pmod 5$ are $(4)$ and $(1,1,1,1)$ and thus there are five parts in these partitions. The partitions into super-distinct parts are $(4)$ and $(3,1)$ and there are three parts in these partitions. There are two pairs of partitions satisfying the conditions of the theorem: $((2), (1^2))$ and $(\emptyset, (4))$.  \end{example}

We provide two proofs of  Theorem \ref{thm_rr1} below, the first of which is analytic, and the second of which is combinatorial.  \subsection{Analytic proof of Theorem \ref{thm_rr1}} 
Let $a(n,m)$ denote the number of partitions of $n$ with parts congruent to $\pm 1 \pmod{5}$ and exactly $m$ parts. Then, 
the generating function for $a(n,m)$ is given by
$$P_1(z;q):= \sum_{n\geq 0}\sum_{m\geq 0} a(n,m)z^mq^n= \frac{1}{(zq;q^5)_\infty(z q^4;q^5)_\infty}.$$
Similarly, if $b(n,m)$ is the number partitions of $n$ with super-distinct parts and exactly $m$ parts, the generating function for $b(n,m)$ is  
$$R_1(z;q):=  \sum_{n\geq 0}\sum_{m\geq 0} b(n,m)z^mq^n=\sum_{n=0}^\infty \frac{z^n q^{n^2}}{(q;q)_n}.$$
Considering  the difference of the derivatives of these functions with respect to $z$  evaluated at $1$, we obtain the generating function for the excess in the number of parts {in all partitions of $n$ with parts congruent to $\pm 1 \pmod{5}$  over the number of parts in all super-distinct partitions of $n$.} We have 
\begin{align}\notag \frac{\partial}{\partial z} & \Big|_{z=1}(P_1(z;q)  -R_1(z;q) ) \\ \notag & = \frac{1}{(q;q^5)_\infty(q^4;q^5)_\infty} \left(\sum_{m=1}^\infty \frac{q^{5m-1}}{1- q^{5m-1}} + \frac{q^{5m-4}}{1- q^{5m-4}} \right)- \sum_{n=0}^\infty \frac{n q^{n^2}}{(q;q)_n} \\ \notag & =  \left(\sum_{n=0}^\infty \frac{q^{n^2}}{(q;q)_n}\right)\left(\sum_{m=1}^\infty \frac{q^{5m-1}}{1- q^{5m-1}} + \frac{q^{5m-4}}{1- q^{5m-4}} \right)- \sum_{n=0}^\infty \frac{n q^{n^2}}{(q;q)_n}\\ \label{def_T1T2} & =: T_1(q)-T_2(q).\end{align}

We next write  down five different generating functions such that their sum is the generating function for the number of pairs of partitions $(\lambda, (a^b))$ with $|\lambda| + ab = n$ satisfying all of the conditions given in Theorem \ref{thm_rr1}).  After doing so, we will prove that the resulting sum of generating functions is equal to $T_1(q) - T_2(q)$. \smallskip
\ \\  
{\bf{Case 1.}} The generating function for the number of pairs of partitions $(\lambda, (1^b))$  such that $\lambda$ is a  super-distinct partition of $n-b$, and $b-1 \in \lambda, b+1 \not \in \lambda$ is
\begin{align}\label{c1_gf} 
\sum_{b=1}^\infty q^b \sum_{m=1}^\infty \sum_{j=1}^m \mathop{\sum_{0\leq n_1\leq n_2 \cdots   \leq n_{j-1}\leq  n_j <  n_{j+1} \leq n_{j+2}   \cdots \leq n_m}}_{2j-1 + n_j = b-1} q^{(1+n_1)+(3+n_2) + \cdots + (2m-1 + n_m)}
,\end{align} which we explain as follows.  Any super-distinct partition with $m\geq1$ parts is of the form $(2m-1+n_m, 2m-3 + n_{m-1},\dots,3 + n_2, 1+n_1)$, where $0\leq n_1\leq n_2 \leq \cdots \leq n_m$.  We sum over all possible positions for a specified part $b-1$ in such a position, namely $2j-1+n_j = b-1$ for $1\leq j \leq m$.  Since $b+1$ can not be in such a partition, we must have the the difference between the consecutive parts
$2j+1 + n_{j+1}$ and $2j-1 + n_j=b-1$ is at least $3$;   equivalently, $n_j < n_{j+1}$.     The size of the second partition $1^b$ in a pair $(\lambda,(1^b))$  appears in the exponent of $q^b$, and we sum over all possible $b\geq 1$.  

We re-write the inner sum in \eqref{c1_gf} as
\begin{align*}
q^{m^2 + b-2j} &\left(\sum_{0\leq n_1\leq n_2\leq  \cdots   \leq n_{j-1}\leq  b-2j}  q^{n_1+n_2 + \cdots + n_{j-1}}\right)\\ &\hspace{.1in}\times \left(\sum_{b-2j+1\leq n_{j+1}\leq n_{j+2}\leq \cdots   \leq n_{m}}  q^{n_{j+1}+n_{j+2} + \cdots + n_m}\right),
\end{align*} where we have also used that $1+3+5+\cdots + 2m-1 = m^2$.   
Using \eqref{lem_1} and \eqref{lem_2}, this can be rewritten as
\begin{align}\label{eqn_inn}
q^{m^2 + b-2j} \left[ \begin{array}{c} b-j-1 \\ j-1 \end{array} \right]_q \frac{q^{(b-2j+1)(m-j)}}{(q;q)_{m-j}}.
\end{align}  Using \eqref{eqn_inn}, the generating function in \eqref{c1_gf} becomes
\begin{align}\notag
\sum_{b=1}^\infty & q^b \sum_{m=1}^\infty \sum_{j=1}^m 
q^{m^2 + b-2j} \left[ \begin{array}{c} b-j-1 \\ j-1 \end{array} \right]_q \frac{q^{(b-2j+1)(m-j)}}{(q;q)_{m-j}} \\ \notag&=  \sum_{m=1}^\infty \sum_{j=1}^m 
\frac{q^{m^2 -2j + (-2j+1)(m-j)} }{(q;q)_{m-j}} \sum_{b=1}^\infty \left[ \begin{array}{c} b-j-1 \\ b- 2j \end{array} \right]_q  q^{b(m-j+2) } \\ \notag
&=  \sum_{m=1}^\infty \sum_{j=1}^m 
\frac{q^{m^2 +m+j}}{(q;q)_{m-j}} \sum_{b=0}^\infty \left[ \begin{array}{c} b+j-1 \\ b \end{array} \right]_q q^{b(m-j+2)}  \\  \notag & = 
 \sum_{m=1}^\infty \sum_{j=1}^m 
\frac{q^{m^2 +m+j }}{(q;q)_{m-j}(q^{m-j+2};q)_j} \\ \label{c1_gf2}
&= \sum_{m=1}^\infty \frac{q^{m^2}}{(q;q)_{m+1}}\sum_{j=1}^m 
 q^{m+j} (1-q^{m-j+1}),  
\end{align}  where we have also used   \eqref{qbin_sym}, \eqref{qbin_zero} and  \eqref{lem_3}.  
 \medskip \ \\
{\bf{Case 2.}} By an explanation similar to the one given in Case 1, the generating function for the number of pairs of partitions $(\lambda, (1^b))$  such that $\lambda$ is a super-distinct partition of $n-b$, and $b+1 \in \lambda, b-1 \not \in \lambda$ is
\begin{align}\label{c2_gf} 
\sum_{b=1}^\infty q^b \sum_{m=1}^\infty \sum_{j=1}^m \mathop{\sum_{0\leq n_1\leq n_2\leq  \cdots   \leq n_{j-1}<  n_j \leq  n_{j+1}\leq    \cdots \leq n_m}}_{2j-1 + n_j = b+1} q^{(1+n_1)+(3+n_2) + \cdots + (2m-1 + n_m)}.\end{align}
Arguing as in Case 1 and using   \eqref{lem_1}--\eqref{lem_2}, we obtain that this equals
\begin{align}\label{c2_gf2} \sum_{m=1}^\infty \frac{q^{m^2}}{(q;q)_{m+1}}\sum_{j=1}^m 
 q^{m+j} (1-q^{m-j+1}).\end{align} \ \\
{\bf{Case 3.}} Similar to the previous cases, we have that the generating function for the number of pairs of partitions $(\lambda, (1^b))$  such that $\lambda$ is a super-distinct partition of $n-b$, and $b-1, b+1 \in \lambda$ is
\begin{align}\label{c3_gf} 
\sum_{b=1}^\infty q^b \sum_{m=1}^\infty \sum_{j=2}^m \mathop{\mathop{\sum_{0\leq n_1\leq n_2 \cdots   \leq n_{j-1}\leq  n_j \leq  n_{j+1} \leq \cdots \leq n_m}}_{2j-3 + n_{j-1} = b-1}}_{2j-1 + n_{j} = b+1}q^{(1+n_1)+(3+n_2) + \cdots + (2m-1 + n_m)}.\end{align}
Arguing as in Case 1 and  using  \eqref{lem_1}--\eqref{lem_2}, we obtain that this equals
\begin{align}\label{c3_gf2} \sum_{m=1}^\infty \frac{q^{m^2}}{(q;q)_{m+1}}\sum_{j=2}^m 
 q^{2j-2} (1-q^{m-j+1})(1-q^{m-j+2}).\end{align} \ \\
{\bf{Case 4.}}  Similar to the previous cases, we have that the generating function for the number of pairs of partitions $(\lambda, (1^b))$  such that $\lambda$ is a super-distinct partition of $n-b$, and $b \in \lambda$ is
\begin{align}\label{c4_gf} 
\sum_{b=1}^\infty q^b \sum_{m=1}^\infty \sum_{j=1}^m \mathop{\sum_{0\leq n_1\leq n_2 \cdots   \leq n_{j-1}\leq  n_j \leq  n_{j+1} \leq n_{j+2}   \cdots \leq n_m}}_{2j-1 + n_j = b} q^{(1+n_1)+(3+n_2) + \cdots + (2m-1 + n_m)}.\end{align}
Arguing as in Case 1 using \eqref{lem_1}--\eqref{lem_2}, we obtain that this equals
\begin{align}\label{c4_gf2} \sum_{m=1}^\infty \frac{q^{m^2}}{(q;q)_{m+1}}\sum_{j=1}^m 
 q^{2j-1} (1-q^{m-j+1}).\end{align} 
{\bf{Case 5.}}
It is not difficult to see that generating function for the number of pairs of partitions $(\lambda, (a^b))$ with   $a\equiv \pm 1 \pmod{5}$ and $a>1$ such that $\lambda$ is a super-distinct partition of $n-ab$  is
\begin{align}
\label{c5_gf}  \left(\sum_{m=0}^\infty \frac{q^{m^2}}{(q;q)_m}\right)\left(\frac{q^4}{1-q^4} + \sum_{m=2}^\infty \left(\frac{q^{5m-1}}{1-q^{5m-1}} + \frac{q^{5m-4}}{1-q^{5m-4}} \right)\right), 
\end{align} using that $R_1(1;q)$ is the generating function for super-distinct partitions. \smallskip

Armed with the generating functions in Cases 1--5 above, we now complete the analytic proof of Theorem \ref{thm_rr1}.  The pairs of partitions described in Theorem \ref{thm_rr1} may be realized as a disjoint union of the pairs described in Cases 1--5 above.  Thus, the generating function for the excess described in Theorem \ref{thm_rr1} may be realized as the sum of the generating functions given in \eqref{c1_gf2}, \eqref{c2_gf2}, \eqref{c3_gf2}, \eqref{c4_gf2}, and \eqref{c5_gf}.  We first add \eqref{c1_gf2}, \eqref{c2_gf2}, \eqref{c3_gf2}, and \eqref{c4_gf2} to obtain:
 \begin{multline*} 2\sum_{m=1}^\infty \frac{q^{m^2}}{(q;q)_{m+1}}\sum_{j=1}^m 
 q^{m+j} (1-q^{m-j+1})  \\ + \sum_{m=1}^\infty \frac{q^{m^2}}{(q;q)_{m+1}}\sum_{j=1}^{m-1} 
 q^{2j} (1-q^{m-j})(1-q^{m-j+1})  \\ + 
\sum_{m=1}^\infty \frac{q^{m^2}}{(q;q)_{m+1}}\sum_{j=1}^m 
 q^{2j-1} (1-q^{m-j+1}) \end{multline*}
 \begin{align}\notag &=\sum_{m=1}^\infty    \frac{q^{m^2}}{(q;q)_{m+1}}\\ \notag 
 &  \times \Bigg(\sum_{j=1}^{m-1}  \bigg(2q^{m+j} (1-q^{m-j+1})  +  q^{2j} (1-q^{m-j})(1-q^{m-j+1}) +  q^{2j-1} (1-q^{m-j+1})\bigg)  \\  \notag  & {\hspace{1.6in}} 
 + 2 q^{2m}(1-q) + q^{2m-1}(1-q)\Bigg) \\\notag  
 & = \sum_{m=1}^\infty \frac{q^{m^2}}{(q;q)_{m+1}}  \Bigg(\sum_{j=1}^{m-1}  \bigg(q^{2j}+q^{2j-1} - q^{m+j+1} - q^{2m+1}\bigg) \\ \notag & \hspace{1.6in}
 + 2 q^{2m}(1-q) + q^{2m-1}(1-q)\Bigg)  \\ \notag 
 &= \sum_{m=1}^\infty \frac{q^{m^2}}{(q;q)_{m+1}}  \Bigg(
 q \frac{1-q^{2m-2}}{1-q} - q^{m+2}\frac{1-q^{m-1}}{1-q} \\ \notag &\hspace{1.6in} - (m-1)q^{2m+1}
  + 2 q^{2m}(1-q) + q^{2m-1}(1-q)\Bigg) \\ \notag  &=
  \sum_{m=1}^\infty \frac{q^{m^2}}{(q;q)_{m+1}}  \Bigg(
 q \frac{1-q^{m+1}}{1-q}   - (m+1)q^{2m+1}\Bigg) \\ \notag
 &=
  \frac{q}{1-q}\sum_{m=1}^\infty \frac{q^{m^2}}{(q;q)_{m}}    - \sum_{m=2}^\infty \frac{m q^{m^2}}{(q;q)_m}
  \\  \label{exc_1234} &=\frac{q}{1-q}\sum_{m=0}^\infty \frac{q^{m^2}}{(q;q)_{m}}    - T_2(q).
  \end{align} Adding \eqref{c5_gf}  to $\frac{q}{1-q}\sum_{m=0}^\infty \frac{q^{m^2}}{(q;q)_{m}}$ from 
  \eqref{exc_1234} 
 we obtain $T_1(q)$, which completes the proof.    \qed
\begin{remark} One can also see  that \eqref{c1_gf2}, \eqref{c2_gf2}, \eqref{c3_gf2}, and \eqref{c4_gf2} are the  respective generating functions {for the pairs of partitions as at the start of Cases 1-4 above} by viewing partitions into $m$ super-distinct parts as the sum of an odd staircase of length $m$, $\delta_m=(2m-1, 2m-3, \ldots, 3,1)$, and the conjugate of a partition with parts at most $m$. We explain this for \eqref{c4_gf2}, noting that \eqref{c1_gf2}, \eqref{c2_gf2}, and \eqref{c3_gf2} can be interpreted similarly.

To show {combinatorially} that \eqref{c4_gf2} is the generating function for the number of pairs of partitions $(\lambda, (1^b))\vdash n$  such that $\lambda$ has super-distinct parts, and $b \in \lambda$, we observe that in 
\begin{align*} \sum_{m=1}^\infty \sum_{j=1}^m q^{2j-1}\cdot q^{m^2} \cdot \frac{1-q^{m-j+1}}{(q;q)_{m+1}},\end{align*} 
for fixed $m,j$, 
the term $\displaystyle \frac{1-q^{m-j+1}}{(q;q)_{m+1}}$ generates partitions with parts at most $m+1$ and no part equal to $m-j+1$. By conjugation, it generates partitions $\eta$ with at most $m+1$ parts and  $\eta_{m-j+1}=\eta_{m-j+2}$. 
The term $q^{m^2}$ generates the staircase $\delta_m$ and the term $q^{2j-1}$ generates the partition $(1^{2j-1})$. Thus, $\displaystyle q^{2j-1}\cdot q^{m^2} \cdot \frac{1-q^{m-j+1}}{(q;q)_{m+1}}$ generates triples $(\delta_m, \eta, (1^{2j-1}))$. Each such  triple  corresponds to the pair $(\lambda, (1^b))$, where $b=2j-1+\eta_{m-j+2}$ and $\lambda=\delta_m+(\eta\setminus\{\eta_{m-j+2}\})$ has $m$ super-distinct parts and $\lambda_{m-j+1}=2j-1+\eta_{m-j+1}=b$.
\end{remark}
\subsection{Combinatorial proof of Theorem \ref{thm_rr1}}

We interpret $T_1(q)$ defined in \eqref{def_T1T2} as the generating function for $|\mathcal T_1(n)|$, where $\mathcal T_1(n)$ is the set of pairs of partitions 
$(\lambda, (a^b))\vdash n$ such that   
$\lambda$ is a super-distinct partition,   $a\equiv \pm 1\pmod 5$, and $b\geq 1$.  By the first Rogers-Ramanujan identity, $|\mathcal T_1(n)|$ is also the number of pairs of partitions $(\lambda, (a^b))\vdash n$ such that   
$\lambda$ is a partition  into parts congruent to $\pm 1\pmod 5$, $a\equiv \pm 1\pmod 5$, and $b\geq 1$. Then, as explained in Section \ref{sec_intro}, $|\mathcal T_1(n)|$ equals the number of parts in all partitions of $n$ into parts congruent to $\pm 1\pmod 5$.

We interpret $T_2(q)$ defined in \eqref{def_T1T2} as the generating function for $|\mathcal T_2(n)|$, where $\mathcal T_2(n)$ is the set of marked partitions of $n$ with super-distinct parts.   Thus, as explained in Section \ref{sec_intro}, $|\mathcal T_2(n)|$ equals the number of parts in all partitions of $n$ into super-distinct parts.

To prove Theorem \ref{thm_rr1} combinatorially, we create an injection  $\varphi: \mathcal T_2(n)\to  \mathcal T_1(n)$ as follows. If $\mu\in  \mathcal T_2(n)$ has marked part $\mu_i$, then $$\varphi(\mu):= (\mu\setminus \{\mu_i\}, (1^{\mu_i})).$$ {In terms of Ferrers diagrams, $\varphi$ removes the row of length $\mu_i$ and transforms it into a rectangular partition $(a^b)$ with $a=1$ and $b=\mu_i$.} 
The image $\varphi(\mathcal T_2(q))$ of the injection consists of pairs $(\lambda, (1^b))\in \mathcal T_1(n)$ such that  none of  $b-1, b, b+1$ is  a part of $\lambda$.  {The inverse of $\varphi$ on $\varphi(\mathcal T_2(q))$, takes $(\lambda, (1^b))\in \mathcal T_1(n)$ such that $b-1, b, b+1\not\in \lambda$, and creates a marked partition $\mu$ by inserting a part equal to $b$ into $\lambda$ and marking  it. }

Then the excess  
of the number of parts in all partitions of $n$ with parts congruent to $\pm 1 \pmod{5}$  over the number of parts in all partitions of $n$ with super-distinct parts equals the size of $\mathcal T_2(n)\setminus \varphi(\mathcal T_2(n))$, the set of pairs of partitions $(\lambda, (a^b))$ such that
$\lambda$ is a super-distinct partition of $n-ab$,  $a\equiv \pm 1\pmod 5$, $b\geq 1$, and 
if $a=1$, then at least one of $b-1, b, b+1$ is a part of $\lambda$. 
\qed

\begin{corollary} \label{cor_rr}Let $n\geq 1$. The number of parts in all partitions of $n$ into super-distinct parts is less than the number of parts equal to $1$ in all partitions of $n$ into parts congruent to $\pm 1 \pmod 5$. 
\end{corollary}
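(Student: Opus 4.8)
The plan is to read the Corollary off from the combinatorial proof of Theorem~\ref{thm_rr1} together with the first Rogers--Ramanujan identity. Write $B(n)$ for the number of parts in all super-distinct partitions of $n$ and $A_1(n)$ for the number of parts equal to $1$ in all partitions of $n$ into parts $\equiv\pm1\pmod{5}$; the claim is $B(n)<A_1(n)$. Recall from that proof that $B(n)=|\mathcal T_2(n)|$ and that $\varphi$ injects $\mathcal T_2(n)$ into $\mathcal T_1(n)$. The first point I would make is that $\varphi$ actually lands in the sub-collection $\mathcal T_1'(n)\subseteq\mathcal T_1(n)$ consisting of those pairs $(\lambda,(a^b))$ with $a=1$, since $\varphi(\mu)=(\mu\setminus\{\mu_i\},(1^{\mu_i}))$ always has a column as its second coordinate. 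Hence $\varphi$ restricts to an injection $\mathcal T_2(n)\hookrightarrow\mathcal T_1'(n)$, so $B(n)\le|\mathcal T_1'(n)|$.

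Next I would identify $|\mathcal T_1'(n)|$ with $A_1(n)$. By the marked-partition correspondence of Section~\ref{sec:notation}, applied with the marked part required to equal $1$ (legitimate because $1\equiv1\pmod{5}$), $A_1(n)$ is the number of pairs $(\lambda,(1^b))\vdash n$ with $b\ge1$ and $\lambda$ a partition into parts $\equiv\pm1\pmod{5}$. Applying the first Rogers--Ramanujan identity to $\lambda$, separately for each value of the size $n-b$, converts ``$\lambda$ into parts $\equiv\pm1\pmod{5}$'' into ``$\lambda$ super-distinct'', and the resulting count is exactly $|\mathcal T_1'(n)|$. Thus $B(n)=|\mathcal T_2(n)|\le|\mathcal T_1'(n)|=A_1(n)$, which already yields the non-strict inequality.

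To upgrade ``$\le$'' to ``$<$'' it suffices to exhibit an element of $\mathcal T_1'(n)$ missed by $\varphi$. By the description of $\varphi(\mathcal T_2(n))$ in the combinatorial proof, $\mathcal T_1'(n)\setminus\varphi(\mathcal T_2(n))$ is precisely the set of pairs $(\lambda,(1^b))\vdash n$ with $\lambda$ super-distinct, $b\ge1$, and at least one of $b-1,b,b+1$ a part of $\lambda$. A one-part witness works for $n\ge2$: take $(\lambda,(1^b))=\big((n/2),(1^{n/2})\big)$ when $n$ is even and $(\lambda,(1^b))=\big(((n+1)/2),(1^{(n-1)/2})\big)$ when $n$ is odd (so $n\ge3$); in each case $\lambda$ has a single part and is therefore super-distinct, $b\ge1$, $|\lambda|+b=n$, and $b$ (respectively $b+1$) is the unique part of $\lambda$, so this pair lies in $\mathcal T_1'(n)$ but not in $\varphi(\mathcal T_2(n))$. (The value $n=1$ is degenerate, since there $\mathcal T_1'(1)=\{(\emptyset,(1))\}=\varphi(\mathcal T_2(1))$, and should be handled separately.)

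I expect the only genuine care to be bookkeeping: in the second paragraph, making sure the Rogers--Ramanujan identity is applied to the component $\lambda$ with its size $n-b$ and the result summed over $b\ge1$ correctly; and in the third paragraph, verifying that the exhibited witness really satisfies all the defining conditions of $\mathcal T_1'(n)$ while failing the condition ``$b-1,b,b+1\notin\lambda$''. There is no substantive new obstacle beyond Theorem~\ref{thm_rr1} and its combinatorial proof. An equivalent analytic route is to note that $A_1(n)-B(n)$ is the coefficient of $q^n$ in $\frac{q}{1-q}\sum_{m\ge0}q^{m^2}/(q;q)_m-\sum_{m\ge0}mq^{m^2}/(q;q)_m$, which by \eqref{exc_1234} is the generating function attached to Cases~1--4, and then to check that its coefficients are positive for $n\ge2$.
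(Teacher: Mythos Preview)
Your approach matches the paper's: both compose the injection $\varphi$ of Theorem~\ref{thm_rr1} with the first Rogers--Ramanujan bijection on the first coordinate to obtain an injection from $\mathcal T_2(n)$ into the set that counts parts equal to $1$ in partitions of $n$ into parts $\equiv\pm1\pmod5$ (the paper calls this target $\mathcal T_{1,1}(n)$, equinumerous with your $\mathcal T_1'(n)$).

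The paper's proof actually stops after exhibiting the injection and never separately justifies the \emph{strict} inequality; you go further by producing an explicit element of $\mathcal T_1'(n)\setminus\varphi(\mathcal T_2(n))$ for each $n\ge2$, which is the right move. Your parenthetical about $n=1$ is in fact decisive rather than a loose end to be ``handled separately'': at $n=1$ the only super-distinct partition is $(1)$ and the only partition into parts $\equiv\pm1\pmod5$ is also $(1)$, so both counts equal $1$ and the strict inequality fails. Thus the hypothesis in the Corollary should read $n\ge2$; there is nothing to salvage at $n=1$.
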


\begin{proof} From the argument in the introduction, the number of parts equal to $1$ in all partitions of $n$ into parts congruent to $\pm 1 \pmod 5$ is equal to $|{\mathcal T}_{1,1}(n)|$, where $\mathcal T_{1,1}(n)$ is the set of marked partitions of $n$ into  parts congruent to $\pm 1 \pmod 5$ with at least one part equal to $1$ and in which the marked part is one of the parts equal to $1$. 


Let $\varphi$ be the injection defined  in the proof of Theorem \ref{thm_rr1}.  If $\mu \in  \mathcal T_2(n)$  with $\mu_j$ marked, then    $\varphi(\mu)=( \mu\setminus \{\mu_j\}, (1^{\mu_j}))$. Since $ \mu\setminus \{\mu_j\}$ is a partition of $n-\mu_j$ into super-distinct parts, by the first Rogers-Ramanujan identity, it corresponds to a unique partition $\eta$ of  $n-\mu_j$ into parts congruent to $\pm 1 \pmod 5$. Consider the marked partition $\xi(\mu) = \eta \cup (1^{\mu_j})$ with the $\mu_j$th part  equal to $1$ marked. This gives an injection from $\mathcal T_2(n)$ into $\mathcal T_{1,1}(n)$. \end{proof}

\begin{remark}  The injection in the combinatorial proof of Theorem \ref{thm_rr1} above establishes that the number of marked super-distinct partitions of $n$ 
equals the number of pairs of partitions $(\lambda,(1^b)) \vdash n$ such that $\lambda$ is a partition  into super-distinct parts, $b\geq 1$, and none of $b-1, b, b+1$ is in $\lambda$ (equivalently, the difference between the number of pairs of partitions $(\lambda,(1^b)) \vdash n$ such that $\lambda$ is a partition  into super-distinct parts, $b\geq 1$,  and the number of such pairs with at least one of of $b-1, b, b+1$ in $\lambda$).  
This same identity also follows independently from the analytic proof of Theorem \ref{thm_rr1}, which we explain as follows.  We have that \eqref{exc_1234}  is the generating function for the difference between the number of pairs of partitions $(\lambda,(1^b)) \vdash n$ such that $\lambda$ is a partition  into super-distinct parts, $b\geq 1$, and the number of marked super-distinct partitions of $n$ 
On the other hand,    \eqref{exc_1234} originated as the sum of 
\eqref{c1_gf}, \eqref{c2_gf}, \eqref{c3_gf}, and \eqref{c4_gf}, a sum which is the generating function for the number of pairs of partitions $(\lambda,(1^b)) \vdash n$ such that $\lambda$ is a partition  into super-distinct parts, $b\geq 1$, and at least one of $b-1, b, b+1$ is in $\lambda$.   Equating these two interpretations for the $q$-series coefficients of \eqref{exc_1234} gives the (equivalent) identity due to the injection in the combinatorial proof of Theorem \ref{thm_rr1}.
\end{remark}
\section{The number of parts in the second Rogers-Ramanujan identity}
In this section we formulate  and prove a result for the second Rogers-Ramanujan identity that is analogous to Theorem \ref{thm_rr1}. Somewhat surprisingly, it is  more difficult to establish this theorem.  As our proof of Theorem \ref{thm_rr2} will show, the excess 
of the number of parts in all partitions of $n$ with   parts congruent to $\pm 2 \pmod{5}$ over the number of parts in all partitions of $n$ with super-distinct parts greater than $1$ can be described combinatorially as the size of a subset $\mathcal S(n)$  (see \eqref{def_Snset} below) of the set of pairs or partitions $(\lambda, (a^b))\vdash n$ such that $\lambda$ has super-distinct parts  {greater than $1$},  $a\equiv \pm 2\pmod 5$, $b\geq 1$. The conditions satisfied by the pairs of partition in $\mathcal S(n)$ can be stated explicitly.  This rather long list of conditions is built around  residue classes of $a$ and the interplay between $a, b,$ and certain parts of $\lambda$, and we do not present it here in its explicit form {for brevity's sake.}  


\begin{theorem}\label{thm_rr2} The excess 
of the number of parts in all partitions of $n$ with   parts congruent to $\pm 2 \pmod{5}$ over the number of parts in all partitions of $n$ with super-distinct parts  {greater than $1$}   equals the number of pairs or partitions $(\lambda, (a^b))\vdash n$ such that $\lambda$ has super-distinct parts  {greater than $1$},  $a\equiv \pm 2\pmod 5$, $b\geq 1$, {{and satisfying conditions prescribed by $\mathcal S(n)$.}}  \end{theorem}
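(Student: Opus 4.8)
The plan is to mirror the two-pronged strategy used for Theorem~\ref{thm_rr1}, but with the bookkeeping adapted to the second Rogers--Ramanujan identity. First I would set up the relevant generating functions: let $P_2(z;q) := \prod_{n\geq 1}\bigl((zq^2;q^5)_\infty(zq^3;q^5)_\infty\bigr)^{-1}$ track partitions with parts $\equiv\pm2\pmod5$ by size ($q$) and number of parts ($z$), and let $R_2(z;q) := \sum_{n\geq 0} z^n q^{n^2+n}/(q;q)_n$ track super-distinct partitions with all parts greater than $1$ by size and number of parts. Differentiating the difference $P_2(z;q)-R_2(z;q)$ with respect to $z$ and setting $z=1$ produces the generating function for the excess in Theorem~\ref{thm_rr2}; using the second Rogers--Ramanujan identity this takes the shape $\widetilde T_1(q) - \widetilde T_2(q)$, where
\begin{align*}
\widetilde T_1(q) &= \left(\sum_{n=0}^\infty \frac{q^{n^2+n}}{(q;q)_n}\right)\left(\sum_{m=1}^\infty \frac{q^{5m-2}}{1-q^{5m-2}} + \frac{q^{5m-3}}{1-q^{5m-3}}\right), \\
\widetilde T_2(q) &= \sum_{n=0}^\infty \frac{n\, q^{n^2+n}}{(q;q)_n}.
\end{align*}
Here $\widetilde T_1(q)$ counts pairs $(\lambda,(a^b))\vdash n$ with $\lambda$ a partition with parts $\equiv\pm2\pmod5$ (equivalently, by the identity, super-distinct parts $>1$), $a\equiv\pm2\pmod5$, $b\geq 1$; and $\widetilde T_2(q)$ counts marked super-distinct partitions of $n$ with parts $>1$.

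For the combinatorial proof I would construct an injection $\psi$ from the set $\mathcal T_2'(n)$ of marked super-distinct partitions of $n$ with parts $>1$ into the set $\mathcal T_1'(n)$ of pairs $(\lambda,(a^b))\vdash n$ as above. The natural first attempt is to take a marked part $\mu_i$ and replace it by the rectangular partition $(2^{\mu_i/2})$ when $\mu_i$ is even, but the parity of $\mu_i$ and the requirement $a\equiv\pm2\pmod5$ both complicate matters, so the map must split into cases according to $\mu_i\bmod 5$ (or $\mu_i\bmod{10}$) and use small rectangles $(a^b)$ with $a\in\{2,3,7,8,\dots\}$; the complement $\mathcal T_1'(n)\setminus\psi(\mathcal T_2'(n))$ is then the set $\mathcal S(n)$ appearing in the statement, and $\mathcal S(n)$ is characterized by an explicit list of local conditions relating $a$, $b$, and the presence or absence of certain nearby parts of $\lambda$ (analogues of the $b-1,b,b+1\in\lambda$ conditions from Case~1--4 of Theorem~\ref{thm_rr1}, but more numerous because $a$ ranges over two nonzero residue classes and the rectangles have height $b>1$). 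Defining
\begin{align}\label{def_Snset}
\mathcal S(n) := \mathcal T_1'(n)\setminus \psi(\mathcal T_2'(n)),
\end{align}
the excess equals $|\mathcal T_1'(n)| - |\mathcal T_2'(n)| = |\mathcal S(n)|$, which is the assertion.

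To make the description of $\mathcal S(n)$ explicit (and, in parallel, to give the analytic proof), I would decompose the generating function for $|\mathcal S(n)|$ as a sum over finitely many cases exactly as in Cases~1--5 of the proof of Theorem~\ref{thm_rr1}: write each super-distinct partition with parts $>1$ as $\delta_m^{(2)} + \eta'$ where $\delta_m^{(2)}=(2m,2m-2,\dots,4,2)$ has size $m^2+m$ and $\eta'$ is the conjugate of a partition with parts at most $m$, pin down the position(s) of the relevant parts of $\lambda$ using \eqref{lem_1}--\eqref{lem_3}, sum the resulting $q$-binomial expressions, and check that the total collapses to $\widetilde T_1(q)-\widetilde T_2(q)$. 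The main obstacle I anticipate is precisely the case analysis for $\psi$ and the attendant combinatorial identification of $\mathcal S(n)$: unlike the $a=1$ situation in Theorem~\ref{thm_rr1}, where only the three values $b-1,b,b+1$ intervened, here the "forbidden'' or "required'' neighboring parts depend on both the residue of $a$ modulo $5$ and on $b$, so verifying that $\psi$ is well-defined and injective, that its image is disjoint from $\mathcal S(n)$, and that the $q$-binomial sums telescope correctly after adding all cases, is where the real work lies. The $q$-series bookkeeping (shifting summation indices, applying \eqref{qbin_sym}--\eqref{qbin_zero} and \eqref{lem_3}, and simplifying geometric sums in $j$) is routine but lengthy, and I would expect the final cancellation to work out just as cleanly as the identity $\eqref{exc_1234}$ did in the first case.
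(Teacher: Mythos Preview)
Your overall framework---differentiate $P_2(z;q)-R_2(z;q)$ at $z=1$, reinterpret the two pieces as $|\mathcal S_1(n)|$ and $|\mathcal S_2(n)|$, build an injection $\psi:\mathcal S_2(n)\to\mathcal S_1(n)$, and define $\mathcal S(n)$ as the complement of the image---is exactly the paper's approach, and the even case $\mu_i=2k\mapsto(\widetilde\mu,(2^k))$ is the paper's Case~1. Two points are worth flagging, however.

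First, the paper gives \emph{only} the combinatorial (injection) proof for Theorem~\ref{thm_rr2}; it does not attempt the $q$-binomial case-by-case analytic computation analogous to \eqref{c1_gf2}--\eqref{exc_1234}. Your expectation that ``the final cancellation will work out just as cleanly as the identity \eqref{exc_1234} did'' is optimistic: the authors remark that this theorem is ``more difficult to establish'' and explicitly decline to present the long list of conditions defining $\mathcal S(n)$, so there is no indication the analytic telescoping is tractable here.

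Second, and more substantively, your proposed case split for $\psi$ (by $\mu_i\bmod 5$ or $\mu_i\bmod{10}$ alone) underestimates what is needed. In the paper the injection for odd $\mu_i=c$ with $c\not\equiv 2,3\pmod 5$ depends not only on the residue of $c$ but also on the gap $y:=c-\mu_{i+1}$ (or $y=c$ if $c$ is the last part): small gaps $y\in\{2,3\}$ are handled by modifying the next part $x=\mu_{i+1}$ and sending the remainder to a $(2^k)$ rectangle, while $y\ge 4$ branches further according to $c\bmod 5$ and then $c\bmod{20}$, in some subcases \emph{adding} new parts to $\widetilde\mu$ or writing $c=3m+r$ and using $(3^m)$. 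The resulting image decomposes into seven explicitly described, pairwise disjoint sets $\mathcal I_1,\dots,\mathcal I_7$, and $\mathcal S(n)=\mathcal S_1(n)\setminus\bigcup_{j=1}^7\mathcal I_j$. A split purely by the residue of the marked part will not produce a well-defined injection into $\mathcal S_1(n)$, because without using the gap $y$ you cannot guarantee that the modified $\lambda$ still has super-distinct parts greater than $1$; this gap-dependence is the missing idea in your outline.
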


\begin{proof} Let $c(n,m)$ denote the number of partitions of $n$ with parts congruent to $\pm 2 \pmod{5}$ and exactly $m$ parts. Then, 
the generating function for $c(n,m)$ is given by
$$P_2(z;q):= \sum_{n\geq 0}\sum_{m\geq 0} c(n,m)z^mq^n= \frac{1}{(zq^2;q^5)_\infty(z q^3;q^5)_\infty}.$$
Similarly, if $d(n,m)$ is the number partitions of $n$ with super-distinct parts  {greater than $1$}, and exactly $m$ parts 
$$R_2(z;q):=  \sum_{n\geq 0}\sum_{m\geq 0} d(n,m)z^mq^n=\sum_{n=0}^\infty \frac{z^n q^{n^2+n}}{(q;q)_n}.$$

Considering  the difference of the derivatives with respect to $z$  evaluated at $1$, we obtain the generating function for the excess in the number of parts. We have 
\begin{align*}\frac{\partial}{\partial z} & \Big|_{z=1}(P_2(z;q)  -R_2(z;q) ) \\ & = \frac{1}{(q^2;q^5)_\infty(q^3;q^5)_\infty} \left(\sum_{m=1}^\infty \frac{q^{5m-2}}{1- q^{5m-2}} + \frac{q^{5m-3}}{1- q^{5m-3}} \right)- \sum_{n=0}^\infty \frac{n q^{n^2+n}}{(q;q)_n} \\ & =  \left(\sum_{n=0}^\infty \frac{q^{n^2+n}}{(q;q)_n}\right)\left(\sum_{m=1}^\infty \frac{q^{5m-2}}{1- q^{5m-2}} + \frac{q^{5m-3}}{1- q^{5m-3}} \right)- \sum_{n=0}^\infty \frac{n q^{n^2+n}}{(q;q)_n}\\ & =: S_1(q)-S_2(q).\end{align*}
We interpret $S_1(q)$ as the generating function for $|\mathcal S_1(n)|$, where
$\mathcal S_1(n)$ is the set of pairs of partitions  $(\lambda, (a^b)) \vdash n$ such that  $\lambda$ has super-distinct parts greater than $1$,  $a \equiv\pm2 \pmod{5}\}$ and $b\geq 1$ if $n\geq 1$, and $|\mathcal S_1(0)| :=0$. Note that  $\lambda = \emptyset$ is allowed.   As explained in Section \ref{sec_intro}, $|\mathcal S_1(n)|$ is the number of parts in all partitions of $n$ with   parts congruent to $\pm 2 \pmod{5}$.  We interpret $S_2(q)$ as the generating function for $|\mathcal S_2(n)|$, where  $\mathcal S_2(n)$ is the set of marked partitions of $n$ with superdistinct parts greater than $1$  if  $n\geq 1$, and  $|\mathcal S_2(0)|:=0.$   Thus, $|\mathcal S_2(n)|$ is number of parts in all partitions of $n$ with super-distinct parts  {greater than $1$}.

For $n\geq 1$, we create an injection  $\psi: \mathcal S_2(n)\to  \mathcal S_1(n)$ as follows.

Start with  $\mu \in S_2(n)$ and suppose the marked  part of $\mu$ is $\mu_i=c$. Set  $$x:=\begin{cases}\mu_{i+1} & \mbox{ if } i<\ell(\mu)\\ 0 &  \mbox{ if } i=\ell(\mu),\end{cases}$$ and let $y=c-x$. Thus, if the marked part is not the last part of $\mu$, $y$ is the difference between the marked part and the next part. Otherwise, $y$ is equal to the marked part. Hence, $y \geq 2$. Moreover, $\mu$ does not contain any of $c+1, c-1, x+1$ and $x-1$ (if $x\neq 0$) as a part.

We denote by $\widetilde \mu$ the partition obtained from $\mu$ by removing the marked part, i.e, $\widetilde{\mu}:= \mu \setminus \{c\}.$

Our definition of $\psi$ depends on the parity of $c$.

\noindent \underline{Case 1:} $c=2k$, $k\geq 1$. Then, we define $$\psi(\mu):=(\widetilde \mu, (2^{k})).$$ {In terms of Ferrers diagrams, $\psi$  removes the row of length $2k$ from  $\mu$ and transforms it into a rectangular partition $(a^b)$ with  $a=2$ and $b=k$. }

The image under $\psi$ of the subset of overpartitions in $\mathcal S_2(n)$ in this case is    $$\mathcal{I}_1(n):=\{(\lambda, (2^k))\in \mathcal S_1(n) \mid  2k-1, 2k, 2k+1 \not \in \lambda\}.$$
{To see that $\psi$ is onto $\mathcal{I}_1(n)$, given $(\lambda, (2^k))\in \mathcal I_1(n)$ we let $\mu=\lambda\cup \{2k\}$ and mark  part $2k$. Then, $\mu\in \mathcal S_2(n)$ and $\psi(\mu)=(\lambda, (2^k))$.}

\noindent \underline{Case 2:} $c=2k+1$, $k\geq 1$. To define $\psi$ we need to consider different residue classes of $c$ modulo $5$. 

(A)  If $c\equiv 2$ or $3 \pmod 5$, define $$\psi(\mu):=(\widetilde \mu, (c)).$$ {In terms of Ferrers diagrams, $\psi$  removes the row of length $c$ from  $\mu$ and transforms it into a rectangular partition $(a^b)$ with  $a=c$ and $b=1$. }
 
 The image under $\psi$ of the subset of overpartitions in $\mathcal S_2(n)$ in this case is $$\mathcal{I}_2(n):=\{(\lambda, (a))\in \mathcal S_1(n) \mid a \mbox{ odd and } a-1,a,a+1 \not \in  \lambda\}.$$  
 {To see that $\psi$ is onto $\mathcal{I}_2(n)$, given $(\lambda, (a))\in \mathcal I_2(n)$ we let $\mu=\lambda\cup \{a\}$ and mark  part $a$. Then, $\mu\in \mathcal S_2(n)$ and $\psi(\mu)=(\lambda, (a))$.}
 
(B)  If $c\not\equiv 2$ or $3 \pmod 5$, then $c=2k+1 \geq 5$, i.e., $k\geq 2$, and we consider several subcases according to the size of $y$.   

(i) If $y=2$ or $3$, then $x\neq 0$ and we define $$\psi(\mu):=(\widetilde \mu\setminus \{x\}\cup\{x+1\}, (2^k)). $$ {In terms of Ferrers diagrams, $\psi$  removes the row of length $c$ from  $\mu$, adds one to the next  part $x$, and transforms $c-1=2k$  into a rectangular partition $(a^b)$ with  $a=2$ and $b=k$. }

Note that, if $y=2$, then $x+1=2k$, and if $y=3$, then $x+1=2k-1$.

 The image under $\psi$ of the subset of overpartitions in $\mathcal S_2(n)$ in this case is \begin{align*} \mathcal{I}_3(n) := & 
   \left\{(\lambda, (2^k))\in \mathcal S_1(n) \Big |  \begin{array}{l} k\geq 2, 2k \in \lambda \mbox{ and } \\ 2k-2,  2k+2 \not \in  \lambda\end{array}\right \}\bigcup \\ &  \left\{(\lambda, (2^k))\in \mathcal S_1(n) \Big | \begin{array}{l} k\geq 2, 2k-1\in \lambda \mbox{ and } \\ 2k-3,  2k+1, 2k+2 \not \in  \lambda\end{array} \right\}. \end{align*} 
    {Note that in the first set above, we also have that $2k-1$ and $2k+1$ are not parts of $\lambda$. However, this is clear since $2k\in \lambda$ and $(\lambda, (2^k))\in \mathcal S_1(n)$. Similarly, in the second set $2k-2, 2k\not\in\lambda$ but we do not mention this explicitly since it is implied by $2k-1\in \lambda$ and $(\lambda, (2^k))\in \mathcal S_1(n)$. For  the remainder of the proof, we will not write these exclusions explicitly.}
    
   {Clearly, the two sets whose union is $\mathcal I_3(n)$ are disjoint. To see that $\psi$ is onto $\mathcal{I}_3(n)$, let $(\lambda, (2^k))\in \mathcal I_3(n)$. If $2k\in \lambda$, replace part $2k$ by parts $2k-1$ and $2k+1$ and mark $2k+1$.  If $2k\not\in \lambda$, then $2k-1\in \lambda$ and we replace part $2k-1$ by parts $2k-2$ and $2k+1$ and mark  $2k+1$.  We obtain a partition $\mu\in S_2(n)$ such that $\psi(\mu)= (\lambda, (2^k))$.}

(ii)] If $y \geq 4$ and $c\equiv 0\pmod 5$, since $c$ is odd, we write write $c=10j+5=2(5j+2)+1$ with $j\geq 0$. Notice that if $j=0$, then $x=0$. Define $$\psi(\mu):=\begin{cases}(\widetilde \mu\setminus \{x\}\cup\{x+1\}, ((5j+2)^2)) & \mbox{ if } x\neq 0 \\ (\widetilde\mu\cup(5j+3), (5j+2)) & \mbox{ if } x=0.\end{cases} $$
In terms of Ferrers diagrams, if the marked part is not the last part of $\mu$, $\psi$  removes the row of   $\mu$ corresponding to the marked part $c$, adds one to the next  part $x$, and transforms $c-1$  into a rectangular partition $(a^b)$ with  $a=(c-1)/2$ and $b=2$. If the marked part $c$ is the last part of $\mu$, $\psi$ removes the part $c$ from $\mu$, and transforms  it into 
 a new  part equal to  $(c+1)/2$ in $\mu$ and a rectangular partition $(a^b)$ with $a=(c-1)/2$ and $b=1$.
  
{Before we describe the image of $\psi$ in this case, we introduce some helpful notation. For a positive integer $u$, we denote by $z_{u,\lambda}$ the largest part of $\lambda$ that is less than or equal to $u$ and it is implicit in this notation that there is such a part  in $\lambda$.
Then, the image under $\psi$ of the subset of overpartitions in $\mathcal S_2(n)$ in this case is \begin{align*} \mathcal{I}_4(n):= & \left\{(\lambda, (a^2))\in \mathcal S_1(n) \Bigg |\begin{array}{l}  a>2, a\equiv 2\pmod 5 \mbox{ and } \\ 2a-1, 2a, 2a+1, 2a+2 \not \in \lambda\\  z_{2a-2,\lambda} \geq 3 \mbox{ and } z_{2a-2,\lambda}-2  \not \in \lambda\end{array}\right\}\bigcup\\ & \left\{(\lambda, (a))\in \mathcal S_1(n) \Big | \begin{array}{l}  a \equiv 2\pmod 5, \  a+1\in \lambda \mbox{ and } \\ \mbox{if }  z \leq 2a+2, z\neq a+1,  \mbox{ then } z \not \in  \lambda\end{array} \right\}.
   \end{align*}  }
   Clearly, the two sets whose union is $\mathcal I_4(n)$ are disjoint. To see that $\psi$ is onto $\mathcal{I}_4(n)$, let $(\lambda, (a^k))\in \mathcal I_4(n)$. Thus $a\equiv 2\pmod 5$ and $k=1$ or $2$. If $k=2$, we add one to the largest part of $\lambda$ that is less than or equal to $2a-2$ and insert and mark a part equat to $2a-1$ into $\lambda$. If $k=1$, we add $a$ to the smallest part of $\lambda$ and we mark the obtained part. 
  We obtain a partition $\mu\in S_2(n)$ such that $\psi(\mu)= (\lambda, (a^k))$.

(iii) If $y \geq 4$ and $c\equiv 4\pmod 5$, since $c$ is odd, we write write $c=10j+9=2(5j+3)+3$ with $j\geq 0$. Define $$\psi(\mu):=\begin{cases}(\widetilde \mu\setminus \{x\}\cup\{x+3\}, ((5j+3)^2)) & \mbox{ if } x\neq 0 \\ (\widetilde \mu\cup\{3\}, ((5j+3)^2)) & \mbox{ if } x= 0.\end{cases} $$
In terms of Ferrers diagrams,  $\psi$  removes the row of   $\mu$ corresponding to the marked part $c$, adds three to the next  part $x$ if $x\neq 0$ and inserts a parts equal to $3$ into $\mu$ if $x=0$; and transforms $c-3$  into a rectangular partition $(a^b)$ with  $a=(c-3)/2$ and $b=2$.
  
    The image under $\psi$ of the subset of overpartitions in $\mathcal S_2(n)$ in this case is \begin{align*} \mathcal{I}_5&(n)   := \\&   \left\{(\lambda, (a^2))\in \mathcal S_1(n) \Bigg | \begin{array}{l}  a\equiv 3\pmod 5 \mbox{ and } \\  2a+3, 2a+4 \not \in \lambda\\ z_{2a+2, \lambda} \geq 5 \mbox{ and } z_{2a+2, \lambda}-t\not \in \lambda \mbox{ for } t=2,3,4\end{array}\right\}\bigcup \\ & \left\{(\lambda, (a^2))\in \mathcal S_1(n) \Big | \begin{array}{l}  a \equiv 3\pmod 5, \ 3\in \lambda \mbox{ and } \\ \mbox{if }  z \leq 2a+4, z\neq 3,  \mbox{ then } z \not \in  \lambda\end{array} \right\}.
   \end{align*} 
    Considering parts less than or equal to $2a+2$, we see that the two sets whose union is $\mathcal I_5(n)$ are disjoint. To see that $\psi$ is onto $\mathcal{I}_5(n)$, let $(\lambda, (a^2))\in \mathcal I_5(n)$. If $3$ is the only part less than or equal to $2a+4$, remove part $3$ from $\lambda$. Otherwise, subtract $3$ from the largest part of  $\lambda$ that is less than or equal to $2a+2$. Finally insert and mark a part equal to $2a+3$ into $\lambda$.  We obtain a partition $\mu\in S_2(n)$ such that $\psi(\mu)= (\lambda, (a^2))$.

(iv) If $y \geq 4$ and $c\equiv 1\pmod 5$, since $c$ is odd, we write write $c=10j+1$ with $j \geq 1$. 
  
  If $c=20h+11=4(5h+2)+3$ for some $h\geq 0$, define $$\psi(\mu):=\begin{cases}(\widetilde \mu\setminus \{x\}\cup\{x+3\}, ((5h+2)^4)) & \mbox{ if } h>0, x\neq 0\\ (\widetilde \mu \cup\{3\}, ((5h+2)^4)) & \mbox{ if } h>0, x= 0\\  (\widetilde \mu\setminus \{x\}\cup\{x+2\}, (3^3)) & \mbox{ if } h=0, x\neq 0\\  (\widetilde \mu \cup\{2\}, (3^3)) & \mbox{ if } h=0, x= 0 .\end{cases} $$

  The image under $\psi$ of the subset of overpartitions in $\mathcal S_2(n)$ in this case is \begin{align*} \mathcal{I}_6&(n):= \\&  \left\{(\lambda, (a^4))\in \mathcal S_1(n) \Bigg | \begin{array}{l}  a\equiv 2\pmod 5, a>2 \mbox{ and } \\  4a+3, 4a+4 \not \in \lambda\\ z_{4a+2, \lambda} \geq 5 \mbox{ and } z_{2a+2, \lambda}-t\not \in \lambda \mbox{ for } t=2,3,4\end{array}\right\}\bigcup \\ & \left\{(\lambda, (a^4))\in \mathcal S_1(n) \Big | \begin{array}{l}  a \equiv 2\pmod 5, a>2, \ 3\in \lambda \mbox{ and } \\ \mbox{if }  z \leq 4a+4, z\neq 3,  \mbox{ then } z \not \in  \lambda\end{array} \right\}\bigcup\\ &  \left\{(\lambda, (3^3))\in \mathcal S_1(n) \Big | \begin{array}{l}    10, 11,12 \not \in \lambda\\ \mbox{if } z_{9,\lambda}\geq 4 \mbox{ and }  z_{9,\lambda}-t\not\in \lambda \mbox{ for } t=2,3\end{array}\right\}\bigcup \\ & \left\{(\lambda, (3^3))\in \mathcal S_1(n) \Big | \begin{array}{l}   2\in \lambda \mbox{ and } \\ \mbox{if }  z \leq 12, z\neq 2,  \mbox{ then } z \not \in  \lambda\end{array} \right\}.
   \end{align*} 
   By considering parts less than or equal to $4a+2$ if $a\equiv 2\pmod 5$, and parts less than $10$ otherwise, we see that the four sets whose union is $\mathcal I_6(n)$ are disjoint. As in the previous cases, one can verify that $\psi$ is onto $\mathcal{I}_6(n)$.

  If $c=20h+1$ for some $h\geq 1$, write $c=3m+r$ with $0\leq r\leq 2$. Note that $m \geq 7$. Moreover, if $r=0$, then $m\equiv 7\pmod{20}$; if $r=1$, then $m\equiv 0\pmod{20}$; and if $r=2$, then $m\equiv 13\pmod{20}$. 
 We  define 
 {\small{\begin{align*} \psi & (\mu) := \\&\begin{cases}(\widetilde \mu\setminus \{x\}\cup\{x+r\}, (3^m)) & \mbox{ if } x\neq 0 \\  \widetilde \mu\cup\{r\}, (3^m)) & \mbox{ if } x=0, r\neq 1\\ (\widetilde \mu\cup\{5(h-1)+8, 5(h-1)+6,5(h-1)+4\}, (5(h-1)+3)) & \mbox{ if } x=0, r=1.\end{cases} \end{align*}}}

 The image under $\psi$ of the subset of overpartitions in $\mathcal S_2(n)$ in this case is \begin{align*} \mathcal{I}_7(n)&:= \\ &  \left\{(\lambda, (3^m))\in \mathcal S_1(n) \Bigg | \begin{array}{l}  m\equiv 7\pmod{20} \mbox{ and } \\  3m-3, 3m-2, 3m-1, 3m, 3m+1 \not \in \lambda\\ \mbox{ There is } z\in \lambda, z \leq 3m-4
 \end{array}\right\}\bigcup \\
 &  \left\{(\lambda, (3^m))\in \mathcal S_1(n) \Bigg | \begin{array}{l}  m\equiv 0\pmod{20}, m>0 \mbox{ and } \\   3m+1, 3m+2 \not \in \lambda\\  z_{3m-2,\lambda}\geq 3 \mbox{ and } z_{3m-2,\lambda}-2\not\in \lambda\end{array}\right\}\bigcup \\
 &  \left\{(\lambda, (3^m))\in \mathcal S_1(n) \Bigg | \begin{array}{l}  m\equiv 13\pmod{20} \mbox{ and } \\   3m+1, 3m+2, 3m+3 \not \in \lambda\\  z_{3m, \lambda}\geq 4 \mbox{ and }  z_{3m, \lambda}-t\not\in \lambda \mbox{ for } t=2,3\end{array}\right\}\bigcup \\
& \left\{(\lambda, (3^m))\in \mathcal S_1(n) \Big | \begin{array}{l}  m\equiv 7 \pmod{20} \mbox{ and } \\ \mbox{if }  z \leq 3m+1,  \mbox{ then } z \not \in  \lambda\end{array} \right\}\bigcup \\
& \left\{(\lambda, (3^m))\in \mathcal S_1(n) \Big | \begin{array}{l}  m\equiv 13 \pmod{20}, 2 \in\lambda \mbox{ and } \\ \mbox{if }  z \leq 3m+3, z\neq 2,  \mbox{ then } z \not \in  \lambda\end{array} \right\}\bigcup \\
& \left\{(\lambda, (a))\in \mathcal S_1(n) \Big | \begin{array}{l}  a\equiv 13 \pmod{15}, a+1, a+3, a+5 \in\lambda \mbox{ and } \\ \mbox{if }  z \leq 4a+10, z\neq  a+1, a+3, a+5,  \mbox{ then } z \not \in  \lambda\end{array} \right\}.
 \end{align*} 
 Clearly,  the six sets whose union is $\mathcal I_7(n)$ are disjoint. 
 
 Upon inspection, we see that the sets $\mathcal I_j$, $1\leq j\leq 7$ are mutually disjoint. Their union is the image of $\mathcal S_2(n)$ under $\psi$.   Thus, the excess 
of the number of parts in all partitions of $n$ with   parts congruent to $\pm 2 \pmod{5}$ over the number of parts in all partitions of $n$ with super-distinct parts  {greater than $1$} equals $\left|\mathcal S(n)\right|$, where
\begin{align}\label{def_Snset} \mathcal S(n) := \mathcal S_1(n)\setminus \displaystyle  \bigcup_{j=1}^7\mathcal I_j.\end{align}
 \end{proof}
 
 \begin{example} Let $n=4$. The only partition with parts congruent to $\pm 2 \pmod 5$ is $(2,2)$ and it has two parts. The only partition into super-distinct parts greater than $1$ is $(4)$  and it has one  part. The only   pair of partitions in $\mathcal S(4)$ is $(\lambda, (a^b))= ((2),(2))$. Clearly, $((2), (2))\in \mathcal S_1(4)$.  Since $a=2$ and $b=1$, the pair is not in $\mathcal I_j(4)$ for $j=2,3,5,6,7$. The pair is not in $\mathcal I_1(4)$ because $2b=2$ is a part of $\lambda=(2)$. Moreover, the pair is not in $\mathcal I_2(4)$ because $a+1=3$ is not a part of $\lambda=(2)$.
 \end{example}
 
\begin{remark} The construction of the injection $\psi$ above shows that likely other choices of injections exist. A simpler injection that allows for a nice description of the complement of its image in $\mathcal S_1(n)$ is welcome. 
\end{remark}

\section*{Acknowledgements}  The authors thank the organizers of the Subbarao Centenary Symposium at IISER Pune, July 2021, after which this collaboration began.  The second author is  partially supported by National Science Foundation Grant DMS-1901791.  
\end{document}